\newtheorem{proof}{Proof}
\newtheorem{theorem}{Theorem}
\newtheorem{remark}{Remark}
\newtheorem{lemma}{Lemma}
\DeclareMathOperator*{\argmin}{arg\,min} 
\newcommand{\diag}{\mathop{\rm diag}\nolimits}
\newcommand{\rr}{{\mathbb R}}
\newcommand{\ba}[1]{\begin{array}{#1}}
\newcommand{\ea}{\end{array}}
\newcommand{\mr}[1]{\mathrm{#1}}
\begin{document}
\begin{frontmatter}

\title{Time-certified Input-constrained NMPC via Koopman Operator\thanksref{footnoteinfo}} 

\thanks[footnoteinfo]{This work was supported by the U.S. Food and Drug Administration under the FDA BAA-22-00123 program, Award Number 75F40122C00200. KG was also supported by the U.S. Department of Energy, Office of Science, Office of Advanced Scientific Computing Research, Department of Energy Computational Science Graduate Fellowship under Award Number DE-SC0022158.}

\author[First]{Liang Wu} 
\author[First]{Krystian Ganko} 
\author[First]{Ricahrd D. Braatz}

\address[First]{Massachusetts Institute of Technology, 
   Cambridge, MA 02139 USA (e-mail: \{liangwu,kkganko,braatz\}@mit.edu).}

\begin{abstract}
Determining solving-time certificates of nonlinear model predictive control (NMPC) implementations is a pressing requirement when deploying NMPC in production environments. Such a certificate guarantees that the NMPC controller returns a solution before the next sampling time. However, NMPC formulations produce nonlinear programs (NLPs) for which it is very difficult to derive their solving-time certificates. Our previous work, \cite{wu2023direct}, challenged this limitation with a proposed input-constrained MPC algorithm having exact iteration complexity but was restricted to linear MPC formulations. This work extends the algorithm to solve input-constrained NMPC problems, by using the Koopman operator and a condensing MPC technique. We illustrate the algorithm performance on a high-dimensional, nonlinear partial differential equation (PDE) control case study, in which we theoretically and numerically certify the solving time to be less than the sampling time.
\end{abstract}

\begin{keyword}
Nonlinear model predictive control, Koopman operator, Extended dynamic mode decomposition, feasible path-following interior-point method, iteration complexity.
\end{keyword}

\end{frontmatter}
%===============================================================================

\section{Introduction}
Model predictive control (MPC) is a model-based optimal control technique widely applied in a range of applications, including in manufacturing processes, energy systems, and robotics, see \cite{qin2003survey}. At each sampling time, MPC solves an on-line optimization which is formulated with a dynamical prediction model and user-specified constraints and objectives. 

Linear MPC is formulated using a linear process model, which leads to solving a quadratic program (QP). Nonlinear MPC (NMPC) instead adopts a nonlinear model, which results in a nonlinear program (NLP) that has a higher computational burden than the corresponding QP. Nearly all industrial systems are better described by nonlinear models, but deploying more computationally expensive NMPC under real-time process requirements for fast time-scale applications, such as robotics, is more challenging than deploying linear MPC.

A key requirement for deploying MPC in production environments is the execution speed (or, throughput), which is measured by both (\textit{i}) the average execution time (to free the processor to execute other tasks), and (\textit{ii}) the worst-case execution time which needs to be less than the sampling time. Most studies concentrate on developing algorithms with fast average execution time, see \cite{ferreau2014qpoases, stellato2020osqp, wu2023simple, wu2023construction}. The emphasis, however, should fall more on the certification of worst-case execution time, which is evident from the common assumption that the solution of the current MPC optimization task must be prepared before the arrival of the next sampling time, e.g., \cite[Assumption 5]{zavala2009advanced}. 

The worst-case execution time is computed from the worst-case number of floating-point operations (``FLOP") using the approximate relation
$$
\textrm{execution time} = \frac{\textrm{total \# FLOP in NLP solve}}{\textrm{average \# FLOP per second}},
$$
in which the denominator roughly depends on the embedded processor technology.\footnote{The number of flops used in each MPC calculation is sufficiently large that using an average in the denominator is highly accurate.} Determining the worst-case number of floating point operations subsequently requires ascertaining the worst-case number of iterations in the iterative optimization algorithm used by the MPC scheme. Certifying the number of iterations of an iterative optimization algorithm is particularly challenging for the on-line NMPC optimization. In most iterative optimization algorithms, the number of iterations depends on the data of the optimization such as the Hessian matrix and the gradient vector, and the data of the MPC optimization depends on the feedback states at each sampling time. More specifically for NMPC formulations, techniques that simplify the NLPs by reformulation as QPs (e.g., via successive online linearization or real-time iteration, see \cite{gros2020linear}) cause the Hessian matrix to also become time-varying. 

The remainder of this section summarizes past research in obtaining worst-case iteration numbers in linear MPC, and the novel extensions of this work to NMPC schemes. In \cite{giselsson2012execution, richter2011computational, bemporad2012simple}, accelerated gradient methods were used to solve the dual problem of linear MPC. The certification procedure on the worst-case number of iterations was also provided to determine the worst-case execution time. In these first-order methods, the iteration bound depends on the distance between the initial point and the optimal solution, which is unknown in advance and needs to be estimated. In practice, these methods are too conservative, i.e., their derived worst-case iteration bound is typically about two orders of magnitude larger than the actual number of iterations, see \cite{richter2011computational}.

In \cite{cimini2017exact, arnstrom2019exact, cimini2019complexity}, active set-based methods were used to solve QPs that arise from linear MPC, and the certification procedure of the iteration bound was described. These works rely on a technique that combines explicit MPC (i.e., off-line generation of a lookup table for the feedback control law) and implicit MPC (i.e., on-line optimization to solve for the feedback control law) to certify the iteration bound. Explicit MPC (\cite{bemporad2002explicit, alessio2009survey}) directly provides the certificate of the execution time as the worst-case search time in the lookup table. However, explicit MPC is practically limited to small and medium-sized problems where lookup table sizes are manageable. Another interesting work from \cite{okawa2021linear} also illustrated how to derive the iteration bound. The input-constrained MPC was first formulated as a linear complementarity problem (LCP) and then a \textit{modified N-step vector} was found via linear programming to ensure that the LCP is solved in $N$ iterations.

Unfortunately, the complexity of the procedures to derive certificates of worst-case execution time for these linear MPC methods hinders their extension to NMPC problems. Our previous work, \cite{wu2023direct}, first proposed an input-constrained linear MPC algorithm with the exact number of iterations,
$$
\mathcal{N}=\!\left\lceil\frac{\log(\frac{2n}{\epsilon})}{-2\log(\frac{\sqrt{2n}}{\sqrt{2n}+\sqrt{2}-1})}\right\rceil\! + 1.
$$
where $\left\lceil c\right\rceil$ maps $c$ to the least integer greater than or equal to $c$. This result is independent of the optimization problem data and dependent only on the number of variables $n$ and the stopping criterion $\epsilon$, making it suitable for parametric MPC problems. In this work, we extend the result to input-constrained nonlinear MPC problems by using the Koopman operator, which identifies a linear model by ``lifting" the state-space dimension of nonlinear dynamical systems. Our result is the first to produce a time-certified algorithm for input-constrained NMPC problems.

\section{Problem formulations}
Consider the input-constrained NMPC formulation
\begin{equation}\label{problem_input_NMPC}
    \begin{aligned}
        \text{(NMPC)}\quad\min\quad& J(\hat{x}_t)= l_N(x_N) + \sum_{k=0}^{N-1}l(x_k,u_k) \\
        \text{s.t. }\quad& x_0 = \hat{x}_t\\
        \quad& x_{k+1} = f(x_k,u_k), ~k\in\mathbb{Z}_{0,N-1},\\
        \quad& u_{\min} \leq u_k \leq u_{\max},~k\in\mathbb{Z}_{0,N-1},
    \end{aligned}    
\end{equation}
where $x_k\in\mathbb{R}^{n_x}, u_k\in\mathbb{R}^{n_u}$ denote the states and the control input, respectively, at time instance $k$ and $\hat{x}_t$ denotes the feedback states at the current sampling time $t$. The nonlinear function $f(\cdot):\mathbb{R}^{n_x}\times \in\mathbb{R}^{n_u}\rightarrow\mathbb{R}^{n_x}$ defines the dynamic model of the plant. The control inputs are constrained in $[u_{\min},u_{\max}]$, which are from the physical limits of the actuators. This article uses the standard formulation in which $l_N=\frac{1}{2}\|x_N-x_r\|_{W_N}^2$ and $l(x_k,u_k)=\frac{1}{2}\|u_k-u_r\|_{W_u}^2 + \frac{1}{2}\|x_k-u_r\|_{W_x}^2$, where $x_r,u_r$ are the targeted tracking references for the states and the control inputs, and $W_N, W_u, W_x$ denote the weight matrices for the terminal states, the control input, and the non-terminal states, respectively.

The formulation (NMPC) (\ref{problem_input_NMPC}) is an NLP that must be solved at every sampling time. There exist many efficient algorithms for solving (NMPC), but they lack the certificate of worst-case solving time---which, by the arguments above, is required for deploying NMPC in production environments. Here we first employ the Koopman operator to obtain a lifted high-dimensional linear predictor for the nonlinear dynamical system via data-driven models. Then, by condensing the Koopman-transformed MPC problem (i.e., eliminating the lifted high-dimensional states), the resulting problem becomes a box-constrained QP depending only on the control inputs.

\subsection{Koopman and Extended Dynamic Mode Decomposition}
\cite{koopman1931hamiltonian} and \cite{koopman1932dynamical} proposed an alternative perspective grounded in operator theory to represent the uncontrolled discrete-time nonlinear dynamical system $ x_{k+1}=f(x_k)$.
Koopman demonstrated the existence of an infinite-dimensional linear operator $\mathcal{K}$, which advances the evolution of an infinite-dimensional Hilbert space of measurement functions $\psi(x)$ described as
\begin{equation}
    \mathcal{K} \psi(x_k) \triangleq \psi(f(x_k)).
\end{equation}
Since Koopman operator theory was described first for autonomous dynamical systems, numerous schemes (see \cite{williams2016extending,proctor2018generalizing,korda2018linear}) have been proposed to extend the application of the Koopman operator to controlled systems of the form
\begin{equation}\label{eqn_nonlinear_system}
 x_{k+1}=f(x_k,u_k),
\end{equation}
where $u_k\in\rr^{n_u}$ denotes the control input of the system at time step $k$. To generalize the Koopman operator to (\ref{eqn_nonlinear_system}), we adopt the scheme from \cite{korda2018linear} which introduced an extended state vector as
\[
\mathcal{X}=\left[\begin{array}{@{}c@{}}
    x  \\
    \mathbf{u}
\end{array}\right]\!,
\]
where $\mathbf{u}\triangleq\left\{u_i\right\}_{i=0}^\infty\in l(\mathcal{U})$ and $u_i\in \mathcal{U}$ represents the control input sequence and $l(\mathcal{U})$ denotes the space of all control input sequences $\mathbf{u}$. The dynamics of the extended state $\mathcal{X}$ are described as 
\[
f_\mathcal{X}(\mathcal{X}) = \left[\begin{array}{@{}c@{}}
     f(x,\mathbf{u}(0))  \\
     \boldsymbol{S}\mathbf{u}
\end{array}\right]\!,
\]
where $\mathbf{u}(i)$ denotes the $i$th element of $\mathbf{u}$ and $\boldsymbol{S}$ represents the left shift operator, $(\boldsymbol{S}\mathbf{u})(i)\triangleq\mathbf{u}(i+1)$. Then the Koopman operator associated with the dynamics of the extended state can be defined on the set of extended observables $\phi(\mathcal{X})$ as
\[
\mathcal{K}\phi(\mathcal{X})\triangleq\phi(f_\mathcal{X}(\mathcal{X})).
\]
The infinite-dimensional Koopman operator must be truncated in practice, and several finite-dimensional approximations have been proposed (see, e.g., \cite{williams2015data,williams2016extending,korda2018linear}) which employ a data-driven Extended Dynamic Mode Decomposition (EDMD) algorithm. In EDMD specifically, the set of extended observables is designed as the ``lifted" mapping
\begin{equation}
    \phi(x,\mathbf{u}) = \left[\begin{array}{@{}c@{}}
        \psi(x) \\
        \mathbf{u}(0)
    \end{array}\right]\!,
\end{equation}
where $\psi(x)\triangleq\left[\psi_1(x), \cdots{}, \psi_{n_\psi}(x)\right]^\top$, $n_\psi$ is the designed number of observables (with $n_\psi \gg n_x$), and $\mathbf{u}(0)$ denotes the first component of the sequence $\mathbf{u}$.

The EDMD approach expands the nonlinear observables $\phi(x,\mathbf{u})$ in a basis function set, e.g., Radial Basis Functions used in \cite{korda2018linear}, instead of directly solving for them via optimization. Only the Koopman operator is learned via an optimization procedure. In particular, the approximate Koopman operator identification problem is reduced to a least-squares problem, which assumes that the sampled data $\{(x_j,\mathbf{u}_j),(x_j^+,\mathbf{u}_j^+)\}~\forall j=1,\cdots, N_d$ are collected with the update mapping
\[
\left[\begin{array}{@{}c@{}}
    x_j^+ \\
    \mathbf{u}_j^+
\end{array}\right]\! =\! \left[\begin{array}{@{}c@{}}
    f(x_j,\mathbf{u}_j(0))  \\
     \boldsymbol{S}\mathbf{u}_j
\end{array}\right],
\]
where the superscript $+$ denotes the value at the next time step. Then an approximation of the Koopman operator, $\mathcal{A}$, is obtained by solving
\begin{equation}\label{problem_EDMD_original}
J(\mathcal{A}) = \min_{\mathcal{A}}\sum_{j=1}^{N_d}\|\phi(x_j^+,\mathbf{u}_j^+)-\mathcal{A}\phi(x_j,\mathbf{u}_j)\|^2. 
\end{equation}
Since there is no need to predict the future control input sequence, the last $n_u$ rows of $\mathcal{A}$ can be discarded. Additionally, let $\bar{\mathcal{A}}$ be the remaining part of $\mathcal{A}$ after discarding the part associated with the future control input. Then $\bar{\mathcal{A}}$ can be decomposed into $A\in\rr^{n_\psi\times n_\psi}$ and $B\in\rr^{n_\psi\times n_u}$ as 
\[
\bar{\mathcal{A}} = \left[A, B\right]
\]
so that the problem (\ref{problem_EDMD_original}) can be reduced to
\begin{equation}\label{problem_EDMD_reduced}
    J(A,B) = \min_{A,B} \sum_{j=1}^{N_d}\|\psi(x_j^+)-A\psi(x_j)-B\mathbf{u}_j(0)\|^2.
\end{equation}
We finally obtain the identified linear predictor model in the ``lifted" space as
\begin{equation}
    \psi_{k+1} = A \psi_k + Bu_k,
\end{equation}
where $\psi_k\triangleq\psi(x_k) \in\rr^{n_\psi}$ denotes the lifted state space. Additionally, the output matrix $C$ is obtained as the best projection of $x$ onto the span of $\psi$ in a least-squares sense, i.e., as the solution to
\begin{equation}\label{problem_EDMD_C}
    J(C) = \min_C \sum_{j=1}^{N_d}\|x_j-C\psi(x_j)\|^2.
\end{equation}
At the end, a linear model for $y$ can be formulated using
\[
y_k = C \psi_k.
\]
\begin{remark}
As \cite{korda2018linear} claims, if the designed lifted mapping $\psi(x)$ contains the state $x$ after the re-ordering 
$\psi(x)= [x^\top, \hat{\psi}(x)]^\top$, then the solution to (\ref{problem_EDMD_C}) is $C=[I,0]$.
\end{remark}
\subsection{Transforming NMPC to condensed MPC}
After obtaining the approximate lifted predictor, NMPC can now be transformed into the MPC problem:
\begin{equation}\label{eqn_KMPC}
    \begin{aligned}
        \min\quad&J(\hat{x}_t)=\tfrac{1}{2}\|C\psi_{N}-x_r\|_{W_N}^2\\
        &\quad+\tfrac{1}{2}\!\sum_{k=0}^{N_-1}\|u_k-u_r\|_{W_u}^2+\|C\psi_k-x_r\|_{W_x}^2\\
        \text{s.t.}\quad&\psi_0 = \psi(\hat{x}_t),\\
        \quad& \psi_{k+1}=A\psi_k+Bu_k,~k\in\mathbb{Z}_{0,N-1},\\
        \quad& -e \leq u_k \leq e,~k\in\mathbb{Z}_{0,N-1},
    \end{aligned}
\end{equation}
where the control inputs are assumed to have been scaled into the unit box constraints $[-e,e]$.

The main drawback of the Koopman operator is that the extremely high-dimensional lifted state space vector may increase the computational cost. This potential concern can be avoided by using the condensed MPC problem formulation. Define $z\triangleq \operatorname{col}(u_0,\cdots{},u_{N-1})\in\mathbb{R}^{n}$, where $n=N\times n_u$,  $\bar{Q}\triangleq\diag(C^\top W_xC,\cdots{},C^\top W_xC,C^\top W_NC)$, $\bar{R}\triangleq\diag(W_u,\cdots{},W_u)$,
\[
S\triangleq\!\left[\begin{array}{cccc}
      B &  0 & \cdots & 0\\
      AB & B & \cdots & 0\\
      \vdots & \vdots & \ddots & \vdots\\
      A^{N-1}B & A^{N-2}B & \cdots & B
 \end{array}\right]\!\!, \ H\triangleq\Bar{R}+S^\top\Bar{Q}S.
\]
These matrices are calculated off-line, so their computation cost is not included in the on-line computational cost. Then (\ref{eqn_KMPC}) is equivalently constructed as
\begin{subequations}\label{problem_Box_QP}
    \begin{align}
        z^*=&\argmin_z J(\hat{x}_t)= \tfrac{1}{2}z^\top H z + z^\top h\label{problem_Box_QP_objective}\\
        &\textrm{s.t. } -e \leq z \leq e,\label{proble_Box_QP_box_constraint}
    \end{align}
\end{subequations}
where
\begin{equation}\label{eqn_h_g}
h\triangleq S^\top\bar{Q}g - \! \left[\begin{array}{@{}c@{}}
     W_uu_r  \\
     \vdots \\
     W_uu_r\\
     W_uu_r
\end{array}\right]\!\!,
\ g\triangleq\!\left[\begin{array}{@{}c@{}}
    A \\
    \vdots \\
    A^{N-2} \\
    A^{N-1}
 \end{array}\right]\!\psi_0  - \left[\begin{array}{c}
       C^\top W_x x_r  \\
      \vdots \\
       C^\top W_x x_r \\
        C^\top W_N x_r
 \end{array}\right]  
\end{equation}
needs to be computed on-line since $\psi_0=\psi(\hat{x}_t)$ and $x_r,u_r$ are time-varying.

\section{Time-certified IPM algorithm}
We solve the Box-QP (\ref{problem_Box_QP}) by adopting the path-following full-Newton Interior Point Method (IPM) with the exact number of iterations from our previous work \cite{wu2023direct}. Its Karush–Kuhn–Tucker (KKT) conditions are
\begin{subequations}\label{eqn_KKT}
\begin{align}
    Hz + h + \gamma - \theta = 0\label{eqn_KKT_a},\\
    z + \alpha - e=0\label{eqn_KKT_b},\\
    z - \omega + e=0\label{eqn_KKT_c},\\
    \gamma \alpha = 0\label{eqn_KKT_d},\\
    \theta \omega = 0\label{eqn_KKT_e},\\
    (\gamma,\theta,\alpha,\omega)\geq0.
\end{align}
\end{subequations}
The path-following IPM introduces a positive parameter $\tau$ to replace (\ref{eqn_KKT_d}) and (\ref{eqn_KKT_e}) by
\begin{subequations}\label{eqn_KKT_tau}
\begin{align}
    \gamma \alpha = \tau^2 e\label{eqn_KKT_tau_d},\\
    \theta \omega = \tau^2 e\label{eqn_KKT_tau_e}.
\end{align}
\end{subequations}
It is well known that, as $\tau$ approaches 0, the path $(z_{\tau},\gamma_{\tau},\theta_{\tau},\alpha_{\tau},\omega_{\tau})$ approaches a solution of \eqref{eqn_KKT}. 

The feasible path-following IPM algorithm has the best theoretical iteration complexity of  $O(\sqrt{n})$. In addition, our algorithm is based on feasible IPM wherein all iterates lie in the strictly feasible set
\[
\mathcal{F}^0\triangleq\{(z,\gamma,\theta,\alpha,\omega)\lvert~\eqref{eqn_KKT_a}\textrm{--}\eqref{eqn_KKT_c}\text{ satisfied},(\gamma,\theta,\alpha,\omega)>0\}.
\]

\subsection{Strictly feasible initial point}
Our previous \cite{wu2023direct} proposed a novel cost-free initialization strategy to find a strictly feasible initial point that also satisfies the specific conditions. First, an obvious strictly feasible initial point is
\[
z^0 = 0,\quad \gamma^0= \|h\|_\infty - \tfrac{1}{2}h,\quad \theta^0 =\|h\|_\infty + \tfrac{1}{2}h, 
\]
\[\alpha^0 = e, \quad\omega^0 = e,
\]
where $\|h\|_\infty=\max \{ |h_1|,|h_2|,\cdots,|h_{n}|\}$. It is straightforward to see that the above initial point strictly lies in $\mathcal{F}^0$.

\begin{remark}[Initialization strategy]\label{remark_initialization_strategy}
If $h=0$, the optimal solution of problem (\ref{problem_Box_QP}) is $z^*=0$; in the case of $h\neq0$, we first scale the objective (\ref{problem_Box_QP_objective}) (which does not change the optimal solution) as
\[
\min_z \tfrac{1}{2} z^\top \!\left(\frac{2\lambda}{\|h\|_\infty}H\right) \!z + z^\top \!\left(\frac{2\lambda}{\|h\|_\infty}h\right)\!.
\]
With the definitions $\tilde{H} = \frac{H}{\|h\|_\infty}$ and $\tilde{h}=\frac{h}{\|h\|_\infty}$, $\|\tilde{h}\|_\infty=1$ and (\ref{eqn_KKT_a}) can be replaced by
\[
2\lambda \tilde{H}z+2\lambda\tilde{h}+\gamma-\theta=0,
\]
and the initial points
\begin{equation}\label{eqn_initialization_stragegy}
    z^0 = 0,\ \gamma^0=1 - \lambda \tilde{h},\ \theta^0 =1 + \lambda \tilde{h},\ \alpha^0 = e, \ \omega^0 = e,
\end{equation}
\end{remark}
can be adopted, where
\[
\lambda =\frac{1}{\sqrt{n+1}}.
\]
It is straightforward to verify that (\ref{eqn_initialization_stragegy}) lies in $\mathcal{F}^0$. The reason to use the scale factor $\frac{2\lambda}{\|h\|_\infty}$ is to make the initial point satisfy the neighborhood requirements, e.g., see \cite[Lemma 4]{wu2023direct}. % KKG: Liang, is the scaling factor $\|h\|_\infty$ known before the problem is solved? KKG: resolved after chat with Liang
% Liang: Sure, calculating $\|h\|_\infty$ only requires n FLOPS, which is accounted for in Theorem 1.

\subsection{Newton direction}
Denote $v=\operatorname{col}(\gamma, \theta) \in \mathbb{R}^{2n}$ and $s=\operatorname{col}(\alpha,\omega) \in \mathbb{R}^{2n}$. Then replace (\ref{eqn_KKT_tau_d}) and (\ref{eqn_KKT_tau_e}) by $v s = \tau^2e$ to obtain the new complementary condition
\begin{equation}
    \sqrt{vs} = \sqrt{\tau^2e}\label{eqn_new_complementary}.
\end{equation}
From \textit{Remark \ref{remark_initialization_strategy}}, $(z, v, s)\in \mathcal{F}^0$ and a direction $(\Delta z,\Delta v,\Delta s)$ can be obtained by solving the system of linear equations
\begin{subequations}\label{eqn_newKKT_compact}
    \begin{align}
        &2\lambda\tilde{H}\Delta z + \Omega \Delta v = 0\label{eqn_newKKT_compact_a},\\
        & \Omega^\top \Delta z + \Delta s = 0\label{eqn_newKKT_compact_b},\\
        &\sqrt{\frac{s}{v}}\Delta v + \sqrt{\frac{v}{s}}\Delta s = 2(\tau e-\sqrt{v s}),\label{eqn_newKKT_compact_c}
    \end{align}
\end{subequations}
where $\Omega=[I,-I] \in\mathbb{R}^{n \times 2n}$. Letting
\begin{subequations}\label{eqn_Delta_gamma_theta_phi_psi}
    \begin{align}
        &\Delta \gamma=\frac{\gamma}{\alpha}\Delta z+2\!\left(\sqrt{\frac{\gamma}{\alpha}}\tau e-\gamma\right)\!,\\
        &\Delta \theta=-\frac{\theta}{\omega}\Delta z+2\!\left(\sqrt{\frac{\theta}{\omega}}\tau e-\theta\right)\!,\\
        &\Delta\alpha = - \Delta z,\\
        &\Delta\omega = \Delta z
    \end{align}
\end{subequations}
reduces (\ref{eqn_newKKT_compact}) into a more compact system of linear equations,
\begin{equation}{\label{eqn_compact_linsys}}
    \begin{aligned}
        &\left(2\lambda\tilde{H}+\diag\!\left(\frac{\gamma}{\alpha}\right)\!+ \diag\!\left(\frac{\theta}{\omega}\!\right)\! \right) \!\Delta z\\
        &=2\!\left(\sqrt{\frac{\theta}{\omega}}\tau e-\sqrt{\frac{\gamma}{\alpha}}\tau e+ \gamma - \theta\right)
    \end{aligned}
\end{equation}

\subsection{Iteration complexity and algorithm implementation}
Let's denote $\beta\triangleq \sqrt{vs}$ and define the proximity measure
\begin{equation}\label{eqn_xi}
\xi(\beta,\tau)=\frac{\|\tau e-\beta\|}{\tau}.
\end{equation}
\begin{lemma}[See \cite{wu2023direct}]\label{lemma_strictly_feasible}
Let $\xi:=\xi(\beta,\tau) < 1$. Then the full Newton step is strictly feasible, i.e., $v_{+}>0$ and $s_{+}>0$.
\end{lemma}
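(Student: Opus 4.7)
The plan is to follow the classical feasible primal--dual IPM template for box-constrained QPs. First, I would introduce the scaled Newton directions $p_v := \sqrt{s/v}\,\Delta v$ and $p_s := \sqrt{v/s}\,\Delta s$, under which (\ref{eqn_newKKT_compact_c}) collapses to the clean equality $p_v + p_s = 2(\tau e - \beta)$. Pulling the strictly positive factors out of $v_+ = \sqrt{v/s}\,(\beta + p_v)$ and $s_+ = \sqrt{s/v}\,(\beta + p_s)$ reduces the target strict feasibility to showing $\beta + p_v > 0$ and $\beta + p_s > 0$ componentwise, and yields the useful product identity $v_+ s_+ = (\beta+p_v)(\beta+p_s)$.

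Next, expanding this product and applying the polarization identity $p_v p_s = \tfrac{1}{4}\bigl[(p_v+p_s)^2 - (p_v-p_s)^2\bigr]$ together with $p_v+p_s = 2(\tau e - \beta)$, a routine cancellation gives the key formula
\[
v_+ s_+ \;=\; \tau^2 e \;-\; \tfrac{1}{4}(p_v - p_s)^2.
\]
To bound the correction term I would exploit positive semi-definiteness of $\tilde H$: combining (\ref{eqn_newKKT_compact_a}) and (\ref{eqn_newKKT_compact_b}) gives $p_v^{\top} p_s = \Delta v^{\top}\Delta s = -(\Omega\Delta v)^{\top}\Delta z = 2\lambda\,\Delta z^{\top}\tilde H \Delta z \geq 0$. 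Together with the definition of $\xi$ this yields $\|p_v-p_s\|^2 \leq \|p_v+p_s\|^2 = 4\xi^2\tau^2$, hence the componentwise bound $(v_+ s_+)_i \geq \tau^2(1-\xi^2) > 0$ whenever $\xi < 1$.

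The main obstacle I anticipate is that strict positivity of the product $v_+ s_+$ does not by itself rule out $v_+$ and $s_+$ both flipping sign together. I would close this gap by a continuity argument along the Newton path: parametrizing $v(t) := v + t\Delta v$ and $s(t) := s + t\Delta s$ for $t \in [0,1]$ and repeating the derivation gives
\[
v(t)\,s(t) \;=\; \bigl((1-t)\beta + t\tau e\bigr)^2 \;-\; \tfrac{t^2}{4}(p_v - p_s)^2,
\]
which stays strictly positive on $[0,1]$ when $\xi<1$, since $\tfrac{t}{2}|(p_v)_i - (p_s)_i| \leq t\xi\tau < t\tau \leq (1-t)\beta_i + t\tau$. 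Combining this with $v(0),s(0) > 0$ and the continuity of $v(t),s(t)$, neither vector can touch zero on $[0,1]$ without forcing $v(t)s(t)$ to vanish, so in particular $v_+ = v(1) > 0$ and $s_+ = s(1) > 0$.
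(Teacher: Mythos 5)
Your proposal is correct, and it is essentially the same argument as the one behind the cited result: the scaled directions $p_v,p_s$ with $p_v+p_s=2(\tau e-\beta)$, the identity $v_+s_+=\tau^2 e-\tfrac{1}{4}(p_v-p_s)^2$, the orthogonality-type bound $p_v^\top p_s=2\lambda\,\Delta z^\top\tilde H\Delta z\geq 0$ giving $\|p_v-p_s\|\leq 2\xi\tau$, and the continuity argument along $t\mapsto(v+t\Delta v,\,s+t\Delta s)$ to exclude a joint sign flip. No gaps; this is the standard full-Newton-step feasibility proof the lemma relies on.
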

\begin{lemma}[See \cite{wu2023direct}]\label{lemma_duality_gap}
    After a full Newton step, let $v_{+}=v+\Delta v$ and $s_{+}=s+\Delta s$, then the duality gap is
    \[
        v_{+}^Ts_{+}\leq(2n)\tau^2.
    \]
\end{lemma}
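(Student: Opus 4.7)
The plan is to scale the third Newton equation into a symmetric form and then bound the resulting cross term via a standard elementary inequality. I would introduce the scaled directions $d_v \triangleq \sqrt{s/v}\,\Delta v$ and $d_s \triangleq \sqrt{v/s}\,\Delta s$, so that \eqref{eqn_newKKT_compact_c} becomes
$$
d_v + d_s = 2(\tau e - \sqrt{vs}).
$$
By construction $d_v d_s = \Delta v\, \Delta s$ and $\sqrt{vs}\,(d_v+d_s) = s\,\Delta v + v\,\Delta s$ componentwise, so this substitution symmetrizes the centering equation while preserving the inner products that drive the duality gap.

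First, I would expand the full Newton update componentwise,
$$
v_{+}s_{+} = vs + (s\,\Delta v + v\,\Delta s) + \Delta v\,\Delta s = vs + 2\sqrt{vs}\,(\tau e - \sqrt{vs}) + d_v d_s,
$$
where the second identity substitutes the scaled centering equation. Summing the components then yields
$$
v_{+}^\top s_{+} = -v^\top s + 2\tau\, e^\top\sqrt{vs} + d_v^\top d_s.
$$

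Next, I would bound the cross term by applying the scalar inequality $4ab \leq (a+b)^2$ componentwise and summing,
$$
d_v^\top d_s \leq \tfrac{1}{4}\|d_v + d_s\|^2 = \|\tau e - \sqrt{vs}\|^2 = 2n\tau^2 - 2\tau\, e^\top \sqrt{vs} + v^\top s.
$$
The key bookkeeping step is that $v,s\in\mathbb{R}^{2n}$, so $e$ is the all-ones vector of length $2n$ and hence $\|\tau e\|^2 = 2n\tau^2$, which is precisely where the factor $2n$ (rather than $n$) enters the claimed bound. Plugging this estimate back into the identity for $v_{+}^\top s_{+}$, the $v^\top s$ and $2\tau e^\top\sqrt{vs}$ contributions cancel, leaving $v_{+}^\top s_{+} \leq 2n\tau^2$.

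The only real subtlety here is that dimension accounting; the rest is a direct algebraic consequence of \eqref{eqn_newKKT_compact} together with $4ab\le(a+b)^2$. Note that strict positivity of $v_{+}, s_{+}$, which is what makes $v_{+}^\top s_{+}$ a genuine duality-gap quantity, is already supplied by \textit{Lemma \ref{lemma_strictly_feasible}} whenever $\xi<1$, so the bound above is meaningful in the regime the algorithm maintains.
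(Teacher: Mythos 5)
Your proof is correct, and it is essentially the standard full-Newton-step duality-gap argument: write $v_{+}s_{+}=vs+\sqrt{vs}\,(d_v+d_s)+d_vd_s$, use the scaled centering equation \eqref{eqn_newKKT_compact_c}, and bound the cross term via $4ab\le(a+b)^2$, with the factor $2n$ coming from $e\in\mathbb{R}^{2n}$. Note the paper itself gives no proof here (it defers to \cite{wu2023direct}), and your derivation matches that reference's approach; a minor remark is that your proof never needs \eqref{eqn_newKKT_compact_a}--\eqref{eqn_newKKT_compact_b}, since the componentwise inequality handles $d_v^\top d_s$ without invoking its sign or any orthogonality.
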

\begin{lemma}[See \cite{wu2023direct}]\label{lemma_xi}
Suppose that $\xi=\xi(\beta,\tau)<1$ and $\tau_+=(1-\eta)\tau$ where $0<\eta<1$. Then
\[
\xi_+=\xi(\beta_+,\tau_+)\leq\frac{\xi^2}{1+\sqrt{1-\xi^2}}+\frac{\eta\sqrt{2n}}{1-\eta}.
\]
Furthermore, if $\xi\leq\frac{1}{\sqrt{2}}$ and $\eta=\frac{\sqrt{2}-1}{\sqrt{2n}+\sqrt{2}-1}$, then $\xi_+\leq\frac{1}{\sqrt{2}}$.
\end{lemma}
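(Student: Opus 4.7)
The plan is to decompose the evolution of the proximity into two pieces: the effect of the full Newton step at fixed $\tau$ (a quadratic-convergence estimate), and the effect of the $\tau$-update (a linear perturbation term).

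For the Newton step, I would introduce the scaled variables $p:=\sqrt{s/v}\,\Delta v$ and $q:=\sqrt{v/s}\,\Delta s$, so that $pq = \Delta v\,\Delta s$ componentwise. Equation~\eqref{eqn_newKKT_compact_c} then reads $p+q=2(\tau e-\beta)$. Contracting \eqref{eqn_newKKT_compact_a} and \eqref{eqn_newKKT_compact_b} with $\Delta z$ gives $\Delta v^\top\Delta s = 2\lambda\,\Delta z^\top\tilde{H}\Delta z \ge 0$ because $\tilde{H}\succeq 0$, whence $p^\top q\ge 0$. A componentwise expansion $v_+s_+ = \beta^2 + \beta(p+q)+pq$ combined with the algebraic identity $pq = \tfrac14[(p+q)^2-(p-q)^2]$ collapses to the clean equality
\[
v_+s_+ \;=\; \tau^2 e - \tfrac14(p-q)^2.
\]

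Two consequences follow. First, $p^\top q\ge 0$ forces $\|p-q\|\le \|p+q\| = 2\tau\xi$, so each component obeys $(v_+s_+)_i \ge \tau^2 - \tfrac14\|p-q\|^2 \ge \tau^2(1-\xi^2)$, yielding the componentwise lower bound $\beta_+ \ge \tau\sqrt{1-\xi^2}\,e$. Second, factoring $\tau^2-(\beta_+)_i^2 = (\tau-(\beta_+)_i)(\tau+(\beta_+)_i)$ and using this lower bound in the denominator, together with $\|(p-q)^2\|\le\|p-q\|^2\le 4\tau^2\xi^2$, yields
\[
\|\tau e-\beta_+\| \;\le\; \frac{\tau\,\xi^2}{1+\sqrt{1-\xi^2}}, \qquad \xi(\beta_+,\tau) \;\le\; \frac{\xi^2}{1+\sqrt{1-\xi^2}}.
\]

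For the $\tau$-update, I would use the splitting $\tau_+ e - \beta_+ = (\tau e - \beta_+) - \eta\tau e$, apply the triangle inequality to obtain $\|\tau_+ e - \beta_+\| \le \|\tau e - \beta_+\| + \eta\tau\sqrt{2n}$, then divide by $\tau_+ = (1-\eta)\tau$ and substitute the quadratic estimate to assemble the claimed recursion in terms of the contraction $\xi^2/(1+\sqrt{1-\xi^2})$ and the perturbation $\eta\sqrt{2n}/(1-\eta)$. For the invariant, substitute $\xi = 1/\sqrt{2}$: then $\sqrt{1-\xi^2}=1/\sqrt{2}$, giving $\xi^2/(1+\sqrt{1-\xi^2}) = (1/2)/(1+1/\sqrt{2}) = 1-1/\sqrt{2}$; and for the prescribed $\eta=(\sqrt{2}-1)/(\sqrt{2n}+\sqrt{2}-1)$, direct algebra gives $1-\eta = \sqrt{2n}/(\sqrt{2n}+\sqrt{2}-1)$ and hence $\eta\sqrt{2n}/(1-\eta) = \sqrt{2}-1$. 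The two contributions sum to exactly $1/\sqrt{2}$, closing the invariant.

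The main obstacle is the quadratic estimate itself: recognizing the clean identity $v_+s_+ = \tau^2 e - \tfrac14(p-q)^2$ and exploiting $p^\top q\ge 0$ \emph{twice}—once to obtain the componentwise lower bound $\beta_+ \ge \tau\sqrt{1-\xi^2}\,e$ (which is what produces the sharp $1+\sqrt{1-\xi^2}$ in the denominator) and once to bound $\|p-q\|$ by $\|p+q\|=2\tau\xi$. Once this is in hand, the $\tau$-update reduces to a triangle inequality and the invariant follows by direct arithmetic.
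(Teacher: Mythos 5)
Your Newton-step analysis is sound and matches the standard argument behind the cited result: defining $p=\sqrt{s/v}\,\Delta v$, $q=\sqrt{v/s}\,\Delta s$ so that $p+q=2(\tau e-\beta)$ by \eqref{eqn_newKKT_compact_c}, using $p^\top q=\Delta v^\top\Delta s=2\lambda\,\Delta z^\top\tilde H\Delta z\ge 0$, the identity $v_+s_+=\tau^2e-\tfrac14(p-q)^2$, the componentwise bound $\beta_+\ge\tau\sqrt{1-\xi^2}\,e$, and $\|(p-q)^2\|\le\|p-q\|^2\le 4\tau^2\xi^2$ all check out and deliver $\xi(\beta_+,\tau)\le\xi^2/(1+\sqrt{1-\xi^2})$.

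The gap is in the $\tau$-update step. Your splitting $\tau_+e-\beta_+=(\tau e-\beta_+)-\eta\tau e$ gives $\|\tau_+e-\beta_+\|\le\|\tau e-\beta_+\|+\eta\tau\sqrt{2n}$, and after dividing by $\tau_+=(1-\eta)\tau$ you obtain
\[
\xi_+\;\le\;\frac{1}{1-\eta}\,\frac{\xi^2}{1+\sqrt{1-\xi^2}}+\frac{\eta\sqrt{2n}}{1-\eta},
\]
not the claimed recursion: the contraction term carries an extra factor $1/(1-\eta)$ that your write-up silently drops. This is not cosmetic. With $\xi=1/\sqrt2$ and $\eta=(\sqrt2-1)/(\sqrt{2n}+\sqrt2-1)$ the weaker bound evaluates to $\tfrac{1}{\sqrt2}+\bigl(\sqrt2-1\bigr)\bigl(1-\tfrac{1}{\sqrt2}\bigr)/\sqrt{2n}>\tfrac{1}{\sqrt2}$, so the invariant in the ``furthermore'' part does not close under your version of the inequality---the exact cancellation to $1/\sqrt2$ needs the stated bound. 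The fix uses a different splitting together with a fact you already have: your identity gives $v_+s_+\le\tau^2e$ componentwise, hence $\|\beta_+\|\le\tau\sqrt{2n}$. Writing $\tau_+e-\beta_+=(1-\eta)(\tau e-\beta_+)-\eta\beta_+$ yields $\|\tau_+e-\beta_+\|\le(1-\eta)\|\tau e-\beta_+\|+\eta\tau\sqrt{2n}$, and dividing by $(1-\eta)\tau$ recovers exactly $\xi_+\le\xi^2/(1+\sqrt{1-\xi^2})+\eta\sqrt{2n}/(1-\eta)$; your closing arithmetic for the invariant is then correct as written.
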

\begin{lemma}[See \cite{wu2023direct}]\label{lemma_xi_condition}
The value of $\xi(\beta,\tau)$ before the first iteration is denoted as
$\xi^0=\xi(\beta^0,(1-\eta)\tau^0)$. If $(1-\eta)\tau^0=1$ and $\lambda=\frac{1}{\sqrt{n+1}}$, then $\xi^0\leq\frac{1}{\sqrt{2}}$ and $\xi(\beta, w)\leq\frac{1}{\sqrt{2}}$ are always satisfied.
\end{lemma}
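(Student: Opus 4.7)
The plan has three parts: compute $\xi^0$ explicitly from the initialization, bound it uniformly in the data, and extend to all iterates by induction.

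First, I would substitute the initialization (\ref{eqn_initialization_stragegy}) into $\beta = \sqrt{vs}$, with $v = \operatorname{col}(\gamma,\theta)$ and $s = \operatorname{col}(\alpha,\omega)$. Since $\alpha^0 = \omega^0 = e$, elementwise multiplication gives $\beta^0 = \operatorname{col}(\sqrt{1-\lambda\tilde h},\,\sqrt{1+\lambda\tilde h})$, and the hypothesis $(1-\eta)\tau^0 = 1$ reduces the definition (\ref{eqn_xi}) to $\xi^0 = \|e-\beta^0\|$. Expanding the squared norm and applying the identity $(\sqrt{1-a}+\sqrt{1+a})^2 = 2 + 2\sqrt{1-a^2}$ coordinatewise yields
\begin{equation*}
(\xi^0)^2 \;=\; 4n \;-\; 2\sum_{i=1}^n \sqrt{2 + 2\sqrt{1-\lambda^2\tilde h_i^2}}.
\end{equation*}

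The main obstacle is to bound this quantity by $1/2$ independently of the problem data. I would observe that each summand is strictly decreasing in $\lambda^2\tilde h_i^2$ on $[0,\lambda^2]$, so combining $\tilde h_i^2 \le \|\tilde h\|_\infty^2 = 1$ with $\lambda^2 = 1/(n+1)$ gives the data-free estimate
\begin{equation*}
(\xi^0)^2 \;\le\; 4n - 2n\sqrt{2 + 2\sqrt{\tfrac{n}{n+1}}}.
\end{equation*}
Rearranging the target $(\xi^0)^2 \le 1/2$ to $\sqrt{2 + 2\sqrt{n/(n+1)}} \ge 2 - 1/(4n)$ and squaring twice reduces the claim to the elementary polynomial inequality $704 n^3 - 288 n^2 + 31 n - 1 \ge 0$, which holds for every $n \ge 1$. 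This computation also explains the particular choice $\lambda = 1/\sqrt{n+1}$: it is the largest value that keeps the bound intact, and the inequality becomes tight as $n \to \infty$.

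Finally, for the assertion that $\xi(\beta,\tau) \le 1/\sqrt{2}$ at every iterate, I would argue by induction on the iteration index. The base case is the bound on $\xi^0$ just established. For the inductive step, Lemma \ref{lemma_xi} with $\eta = (\sqrt{2}-1)/(\sqrt{2n}+\sqrt{2}-1)$ propagates $\xi \le 1/\sqrt{2}$ to $\xi_+ \le 1/\sqrt{2}$ after each full Newton update, closing the induction and giving the uniform bound.
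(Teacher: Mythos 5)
Your proof is correct, and it is worth noting that this paper itself gives no proof of Lemma~\ref{lemma_xi_condition} — it simply defers to \cite{wu2023direct} — so your argument serves as a self-contained verification rather than a restatement. I checked the key steps: with $\alpha^0=\omega^0=e$ and $(1-\eta)\tau^0=1$ the expansion $(\xi^0)^2=\sum_i\bigl[4-2\bigl(\sqrt{1-\lambda\tilde h_i}+\sqrt{1+\lambda\tilde h_i}\bigr)\bigr]=4n-2\sum_i\sqrt{2+2\sqrt{1-\lambda^2\tilde h_i^2}}$ is exact; the monotonicity bound with $|\tilde h_i|\le\|\tilde h\|_\infty=1$ and $\lambda^2=\tfrac{1}{n+1}$ is valid; and squaring $\sqrt{2+2\sqrt{n/(n+1)}}\ge 2-\tfrac{1}{4n}$ twice (both sides positive) does reduce to $704n^3-288n^2+31n-1\ge 0$, which holds for all $n\ge 1$, so $(\xi^0)^2\le\tfrac12$ follows, with the bound indeed tight as $n\to\infty$. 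The second claim is correctly dispatched by induction with Lemma~\ref{lemma_xi} under $\eta=\tfrac{\sqrt{2}-1}{\sqrt{2n}+\sqrt{2}-1}$; for completeness you could remark that Lemma~\ref{lemma_strictly_feasible} (with $\xi\le\tfrac{1}{\sqrt 2}<1$) keeps the iterates strictly feasible so that $\beta_+=\sqrt{v_+s_+}$ is well defined at every step, but this is a presentational nicety rather than a gap.
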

\begin{lemma}[See \cite{wu2023direct}]\label{lemma_exact}
    Let $\eta=\frac{\sqrt{2}-1}{\sqrt{2n}+\sqrt{2}-1}$ and $\tau^0=\frac{1}{1-\eta}$, Algorithm \ref{alg_time_certifed_IPM} exactly requires    \begin{equation}
\mathcal{N}=\left\lceil\frac{\log(\frac{2n}{\epsilon})}{-2\log(\frac{\sqrt{2n}}{\sqrt{2n}+\sqrt{2}-1})}\right\rceil\! + 1
    \end{equation}  
    iterations, the resulting vectors being $v^\top s\leq\epsilon$. 
\end{lemma}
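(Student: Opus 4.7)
The plan is to combine the duality-gap bound of Lemma~\ref{lemma_duality_gap} with the geometric contraction of the barrier parameter $\tau$ dictated by $\eta$, and then invert the resulting inequality to recover the iteration count. Since the choice $\tau^0 = 1/(1-\eta)$ gives $(1-\eta)\tau^0 = 1$, the hypothesis of Lemma~\ref{lemma_xi_condition} is met, so the initial proximity measure satisfies $\xi^0 \leq 1/\sqrt{2}$; this is precisely the precondition needed to invoke Lemma~\ref{lemma_xi} recursively to keep $\xi^k \leq 1/\sqrt{2}$ throughout, which in turn (via Lemma~\ref{lemma_strictly_feasible}) keeps every full Newton step strictly feasible.

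First I would show by a short induction that after $k$ iterations the barrier parameter equals $\tau^k = (1-\eta)^{k-1}$, starting from $\tau^0 = 1/(1-\eta)$ and using the rule $\tau \mapsto (1-\eta)\tau$. The inductive step leans on Lemma~\ref{lemma_xi} to carry the proximity bound forward, and on Lemma~\ref{lemma_strictly_feasible} to carry the strict feasibility of $(v,s)$ forward, so that the next full Newton step with the same multiplicative update is legitimate. Applying Lemma~\ref{lemma_duality_gap} at iteration $k$ then gives $v^\top s \leq 2n(\tau^k)^2 = 2n(1-\eta)^{2(k-1)}$. Imposing the stopping criterion $v^\top s \leq \epsilon$ and taking logarithms yields
\[
k - 1 \;\geq\; \frac{\log(2n/\epsilon)}{-2\log(1-\eta)}.
\]
Substituting $1-\eta = \sqrt{2n}/(\sqrt{2n}+\sqrt{2}-1)$ turns the denominator into the expression appearing in the statement, and taking a ceiling together with the "$+1$" for the initialization iterate yields the stated $\mathcal{N}$.

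The main obstacle I anticipate is bookkeeping rather than technical depth: one must track carefully whether the $\tau$-update occurs before or after the Newton step, since this exactly determines the "$+1$" in the final count, and one must verify that the scaled initialization from Remark~\ref{remark_initialization_strategy} actually sits in the neighborhood required by Lemma~\ref{lemma_xi_condition}. The latter is essentially built-in by design (the scale factor $2\lambda/\|h\|_\infty$ was introduced precisely to place $(v^0,s^0)$ in that neighborhood), so once indexing conventions are fixed the argument collapses to the geometric computation above and the "exact" count follows because each of the above inequalities is tight under the stated parameter choices.
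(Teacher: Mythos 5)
Your proposal is correct and follows the intended argument (the paper itself defers the proof to \cite{wu2023direct}): chain Lemma~\ref{lemma_xi_condition}, Lemma~\ref{lemma_xi}, and Lemma~\ref{lemma_strictly_feasible} to keep $\xi\leq 1/\sqrt{2}$ and strict feasibility at every step, apply Lemma~\ref{lemma_duality_gap} with $\tau^k=(1-\eta)^{k-1}$, and invert $2n(1-\eta)^{2(k-1)}\leq\epsilon$ using $1-\eta=\sqrt{2n}/(\sqrt{2n}+\sqrt{2}-1)$ to get $\mathcal{N}=\lceil\log(2n/\epsilon)/(-2\log(1-\eta))\rceil+1$. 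One small note: the "exactly requires" wording reflects that the algorithm runs the precomputed $\mathcal{N}$ iterations by design (your argument establishes sufficiency, which is all that is needed), not that the intermediate inequalities are tight.
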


\begin{theorem}\label{theorem}
Let $m_\textrm{lifting}$ denote the number of FLOP required by the lifting mapping. Then Algorithm \ref{alg_time_certifed_IPM} requires at most $m_\textrm{lifting} + (2Nn_\psi^2 + \frac{N(N+1)n_un_\psi}{2} + Nn_xn_\psi+Nn_u^2+ 2n) + n + 5n+3 + \mathcal{N}\!\left( 1 + \frac{1}{3}n^3+\frac{1}{2}n^2\right.$ $\left.+\frac{1}{6}n  + 2n^2 +  10n + 5n\right)$ FLOP.
\end{theorem}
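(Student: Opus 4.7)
The plan is a direct FLOP accounting that traces Algorithm \ref{alg_time_certifed_IPM} in the order in which it actually executes, sums the cost of each stage, and relies on Lemma \ref{lemma_exact} to fix the outer-loop length at $\mathcal{N}$. I would organize the count into three blocks: (i) on-line data preparation, (ii) construction of the strictly feasible initial point, and (iii) one iteration of the inner loop.

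For stage (i) I would first charge $m_\textrm{lifting}$ FLOP to evaluate $\psi_0=\psi(\hat{x}_t)$. Computing the vector $g$ in \eqref{eqn_h_g} requires the stacked matrix--vector product $\operatorname{col}(A,\ldots,A^{N-1})\psi_0$, which costs $2Nn_\psi^2$ because the powers of $A$ are stored off-line, together with $Nn_xn_\psi$ FLOP to assemble the on-line reference term built from $C^\top W_x x_r$. Forming $h=S^\top\bar Q g-\operatorname{col}(W_u u_r,\ldots,W_u u_r)$ exploits the block-triangular structure of $S$: there are only $\tfrac{N(N+1)}{2}$ nonzero blocks of size $n_\psi\times n_u$, so $S^\top\bar Q g$ contributes $\tfrac{N(N+1)n_un_\psi}{2}$ FLOP, the control-reference block adds $Nn_u^2$, and the final vector subtraction adds $2n$ where $n=Nn_u$.

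For stage (ii), computing $\|h\|_\infty$ uses $n$ comparisons, which accounts for the isolated $n$ term. Forming $\tilde h=h/\|h\|_\infty$, the product $\lambda\tilde h$, and then $\gamma^0=1-\lambda\tilde h$ and $\theta^0=1+\lambda\tilde h$ according to \eqref{eqn_initialization_stragegy} contributes $5n$ FLOP, while the scalar set-up ($\lambda=1/\sqrt{n+1}$, $\eta$, and the first $\tau^0=1/(1-\eta)$) accounts for the constant $+3$.

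For stage (iii) I would price one inner iteration as follows. Assembling the diagonal corrections $\gamma/\alpha$ and $\theta/\omega$ added to $2\lambda\tilde H$ on the left, and the quantity $\sqrt{\theta/\omega}\tau e-\sqrt{\gamma/\alpha}\tau e+\gamma-\theta$ on the right of \eqref{eqn_compact_linsys}, costs $10n$ FLOP. Cholesky factorization of the resulting $n\times n$ SPD matrix contributes $\tfrac{1}{3}n^3+\tfrac{1}{2}n^2+\tfrac{1}{6}n$, and the two triangular back-substitutions add $2n^2$. Recovering $\Delta\gamma,\Delta\theta,\Delta\alpha,\Delta\omega$ from \eqref{eqn_Delta_gamma_theta_phi_psi} and then updating $(z,v,s)$ costs $5n$, while the scalar update $\tau\leftarrow(1-\eta)\tau$ adds $1$. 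Multiplying the per-iteration total by $\mathcal{N}$ and summing with the totals from stages (i) and (ii) reproduces exactly the expression in the theorem. The main obstacle is purely bookkeeping: one must carefully separate on-line from pre-computed off-line quantities (in particular $H$, $\bar Q$, $C^\top W_xC$, and the block powers of $A$) to avoid double counting, and one must correctly exploit the block-triangular sparsity of $S$ to obtain the quadratic-in-$N$ term with the precise factor $\tfrac{1}{2}$; the remaining dense-algebra counts (matrix--vector products, Cholesky, and triangular solves) are textbook.
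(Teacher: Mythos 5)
Your proposal is correct and follows essentially the same approach as the paper: a step-by-step FLOP count of Algorithm \ref{alg_time_certifed_IPM} (lifting and computing $h$ via the structured implementation of \eqref{eqn_h_g}, the $\|h\|_\infty$ computation, the initialization, and the per-iteration Cholesky-based Newton step), multiplied by the exact iteration count $\mathcal{N}$ from Lemma \ref{lemma_exact}. In fact your accounting is more detailed than the paper's own terse proof, which simply assigns the stated totals to Steps 1--4 without itemizing each term.
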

\begin{proof}
    In Algorithm \ref{alg_time_certifed_IPM}: Step 1 takes $m_\textrm{lifting}$ and $(2Nn_\psi^2 + \frac{N(N+1)n_un_\psi}{2} + Nn_xn_\psi+Nn_u^2+ 2n)$ FLOP, which can be achieved by an efficient implementation of (\ref{eqn_h_g}); Step 2 takes $n$ FLOP to find the infinity norm of $h$; Step 3 takes $5n+3$ FLOP to assign the values for 5 vectors and 3 scalars. Each iteration of Step 4 takes in total $\left( 1 + \frac{1}{3}n^3+\frac{1}{2}n^2\right.$ $\left.+\frac{1}{6}n + 2n^2 + 10n + 5n\right)$ FLOP.
\end{proof}
\begin{remark}
    By Theorem \ref{alg_time_certifed_IPM}, the lifted high-dimensional states brought by the Koopman operator only slightly increase the on-line computation cost.
\end{remark} % KKG: please quantify "slightly", and compared to what? KKG: resolved, last term in flops expression dominates, and "slight increase" refers to additional cost of lifting, "m_lifting", which is small

\begin{algorithm}[H]
    \caption{A time-certified IPM algorithm for input-constrained Koopman MPC (\ref{eqn_KMPC})
    }\label{alg_time_certifed_IPM}
    \textbf{Input}: the current feedback states $\hat{x}_t$, the state reference signal $x_r$, the lifting mapping $\psi$, and the stopping tolerance $\epsilon$; the required exact number of iterations $\mathcal{N}=\left\lceil\frac{\log(\frac{2n}{\epsilon})}{-2\log(\frac{\sqrt{2n}}{\sqrt{2n}+\sqrt{2}-1})}\right\rceil\!+1$.
    \vspace*{.1cm}\hrule\vspace*{.1cm}
    \begin{enumerate}[label*=\arabic*., ref=\theenumi{}]
        \item $\psi_0\leftarrow\psi(\hat{x}_t)$ and calculate $h$ from (\ref{eqn_h_g});\\
        \item \textbf{if }$\|h\|_\infty=0$, \textbf{return} $z^*=0$; \textbf{otherwise},\\
        \item  $(z,\gamma,\theta,\phi,\psi)$ are initialized from (\ref{eqn_initialization_stragegy}) where $\lambda\leftarrow\frac{1}{\sqrt{n+1}}$, $\eta\leftarrow\frac{\sqrt{2}-1}{\sqrt{2n}+\sqrt{2}-1}$ and $\tau\leftarrow\frac{1}{1-\eta}$;\\
        \item \textbf{for} $k=1, 2,\cdots{}, \mathcal{N}$ \textbf{do}
        \begin{enumerate}[label=\theenumi{}.\arabic*., ref=\theenumi{}.\arabic*]
        \item $\tau\leftarrow(1-\eta)\tau$;
        \item solve \eqref{eqn_compact_linsys} for $\Delta z$ by using the Cholesky decomposition method with one forward substitution and one backward substitution;
        \item calculate $(\Delta\gamma,\Delta\theta,\Delta\alpha,\Delta\omega)$ from \eqref{eqn_Delta_gamma_theta_phi_psi};
        \item  $z\leftarrow z+\Delta z$, $\gamma\leftarrow \gamma+\Delta \gamma$, $\theta\leftarrow \theta+\Delta \theta$, $\alpha\leftarrow \alpha+\Delta \alpha$, $\omega\leftarrow \omega+\Delta \omega$;
        \end{enumerate}
        \item \textbf{end}
    \end{enumerate}
\end{algorithm}

\section{Nonlinear PDE control case study}
This section illustrates the effectiveness of our time-certified algorithm for a  nonlinear PDE control example. The PDE plant under consideration is the nonlinear Korteweg-de Vries (KdV) equation that models the propagation of acoustic waves in plasma or shallow water waves (see \cite{miura1976korteweg}) as
\begin{equation}
\frac{\partial y(t, x)}{\partial t}+y(t, x) \frac{\partial y(t, x)}{\partial x}+\frac{\partial^3 y(t, x)}{\partial x^3}=u(t,x)
\end{equation}
where $x\in[-\pi,\pi]$ is the spatial variable. We consider the control input $u$ to be $u(t,x)=\sum_{i=1}^4u_i(t)v_i(x)$, in which the four coefficients $\{u_i(t)\}$ are subject to the constraint $[-1,1]$ and are computed by the model predictive controller, and $v_i(x)$ are predetermined spatial profiles given as $v_i(x)=e^{-25(x-m_i)^2}$, with $m_1=-\pi/2$, $m_2=-\pi/6$, $m_3=\pi/6$, and  $m_4=\pi/2$.

The control objective is for the spatial profile $y(t,x)$ to track the given reference signal. In our closed-loop simulation, we discretize the $x$-axis of the nonlinear KdV equation at $N=128$ nodes, and adopt a spectral method involving the Fourier transform and split stepping to solve the nonlinear KdV equation, e.g., see \cite{KdV2012website}. The sampling time is chosen as $\Delta t=0.01$ s for data generation and the model predictive controller. The setting for our closed-loop simulation includes:
\begin{itemize}
    \item[\textit{i)}] \textit{Data generation:} The data are collected from 1000 simulation trajectories with 200 samples. At each simulation, the initial condition of the spatial profile is a random combination of four given spatial profiles, i.e., $y_1^0(0,x)=e^{-(x-\pi/2)^2}$, $y_2^0(0,x)=-\operatorname{sin}(x/2)^2$, $y_3^0(0,x)=e^{-(x+\pi/2)^2}$, $y_4^0(0,x)=\operatorname{cos}(x/2)^2$. The four control inputs $u_i(t)$ are distributed uniformly in $[-1,1]$.
    \item[\textit{ii)}] \textit{Koopman predictor:} We choose the lift function $\psi$ consisting of the origin states (128 spatial nodes), the constant $1$, the elementwise product of the origin states with one element shift, and the elementwise square of the origin states, which leads to the lifted state dimension $N_{\textit{lift}}=3\times 128+1=385$. Then the lifted linear predictor with $A\in\mathbb{R}^{385\times 385}$ and $B\in\mathbb{R}^{385\times 4}$ is obtained from the Moore-Penrose pseudoinverse of the lifting data matrix, and its output matrix is $C=[I_{128},0]\in\mathbb{R}^{128\times 385}$.
    \item[\textit{iii)}] \textit{MPC settings:} Set the prediction horizon $N=10$, the state cost matrix $W_x=W_N=I_{128}$, and the control inputs matrix $W_u=0.01 I_{4}$, and the control inputs are subject to $[-1,1]$. The state references $x_r\in\mathbb{R}^{128}$ are piecewise constant taking the values $[0.5,0.25,0,0.75]\times \text{ones}(128,1)$ for a 50 s simulation time, with the control input reference $u_r=0$.
\end{itemize}

Before the closed-loop simulation, we can calculate the worst-case total floating operations required at each sampling time. The dimension of the resulting Box-QP problem (\ref{problem_Box_QP}) is $n=4\times N=40$, and we adopt the stopping criteria $\epsilon=1\times10^{-6}$, so the required number of iterations is 
\[
\mathcal{N}=\left\lceil\frac{\log(\frac{2\times 40}{1e-6})}{-2\log(\frac{\sqrt{2\times 40}}{\sqrt{2\times 40}+\sqrt{2}-1})}\right\rceil\! + 1= 202.
\]
Further, the FLOP for the lifting mapping is $m_{\textrm{lifting}}=2N=256$, and, by Theorem \ref{theorem}, Algorithm \ref{alg_time_certifed_IPM} exactly require $256+(2\times10\times(385)^2+5\times11\times4\times385+10\times128\times385+10\times4\times385+2\times40)+40+200+3+202\times(1+1/3(40)^3+1/2(40)^2+40/6+2(40)^2+600) = 0.0088\times 10^{9}$ FLOP, which approximately leads to the execution time $0.0066$ s on a machine with 1 GFLOP/s computing power (a trivial requirement for most processors today). Thus, we can get a certificate that the execution time will be less than the adopted sampling time $\Delta t=0.01$ s.

In context, we ran our closed-loop simulation on a modern MacBook Pro with 2.7~GHz 4-core Intel Core i7 processors and 16GB RAM. Algorithm~\ref{alg_time_certifed_IPM} is executed in MATLAB2023a via a C-mex interface. The number of iterations was exactly $202$, and the maximum execution time was about $0.0075$ s less than $\Delta t=0.01$s. The closed-loop simulation results are plotted in Fig.\  \ref{fig_kdv}, which shows that the MPC algorithm provides quick and accurate tracking of the spatial profile $y(t,x)$ to the given reference profile. The control inputs do not violate $[-1,1]$, and also track the control input reference $u_r=0$ well. 
\begin{figure*}[!t]\label{fig}
\begin{picture}(140,110)
\put(-30,-15){\includegraphics[width=75mm]{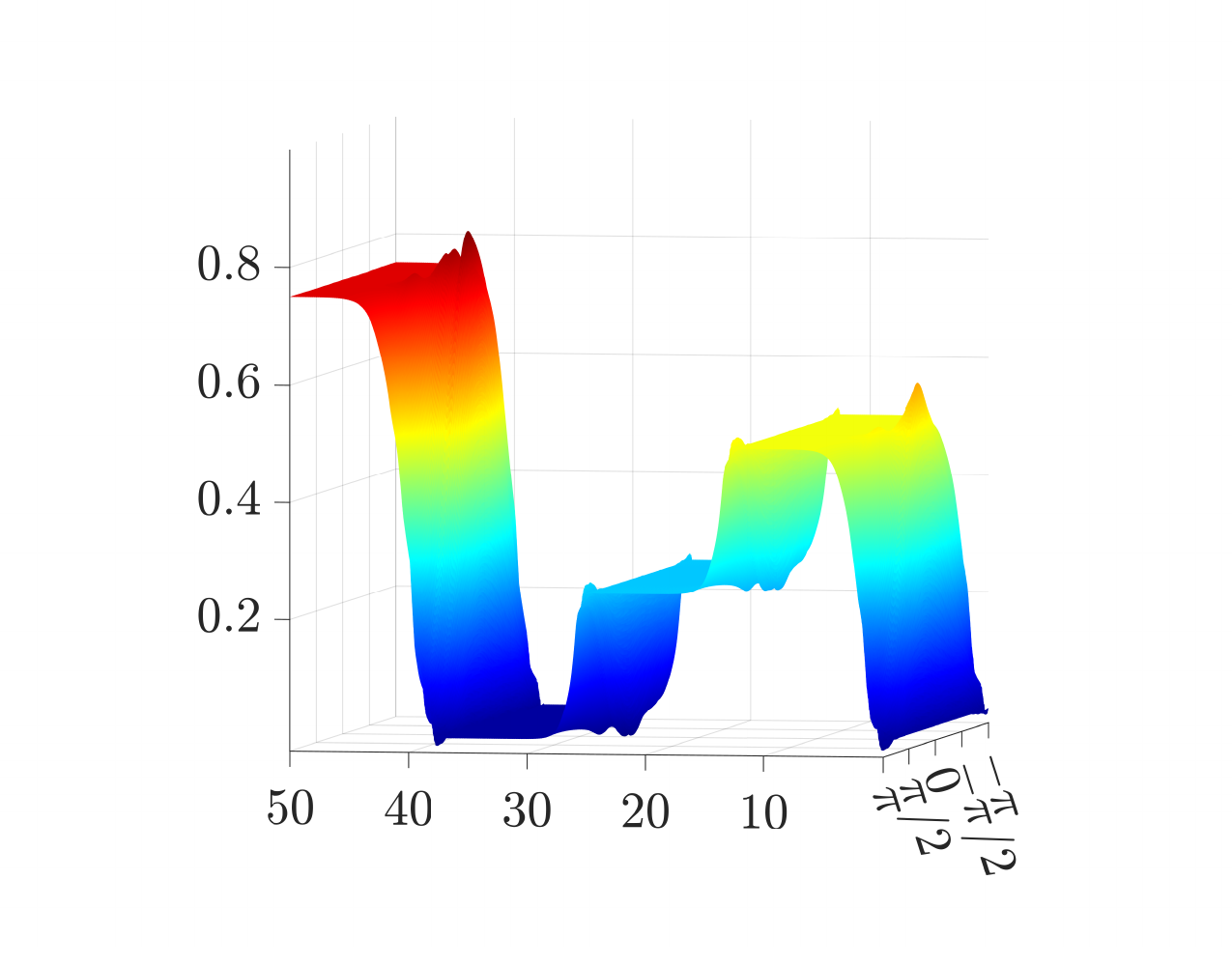}}
\put(160,0){\includegraphics[width=55mm]{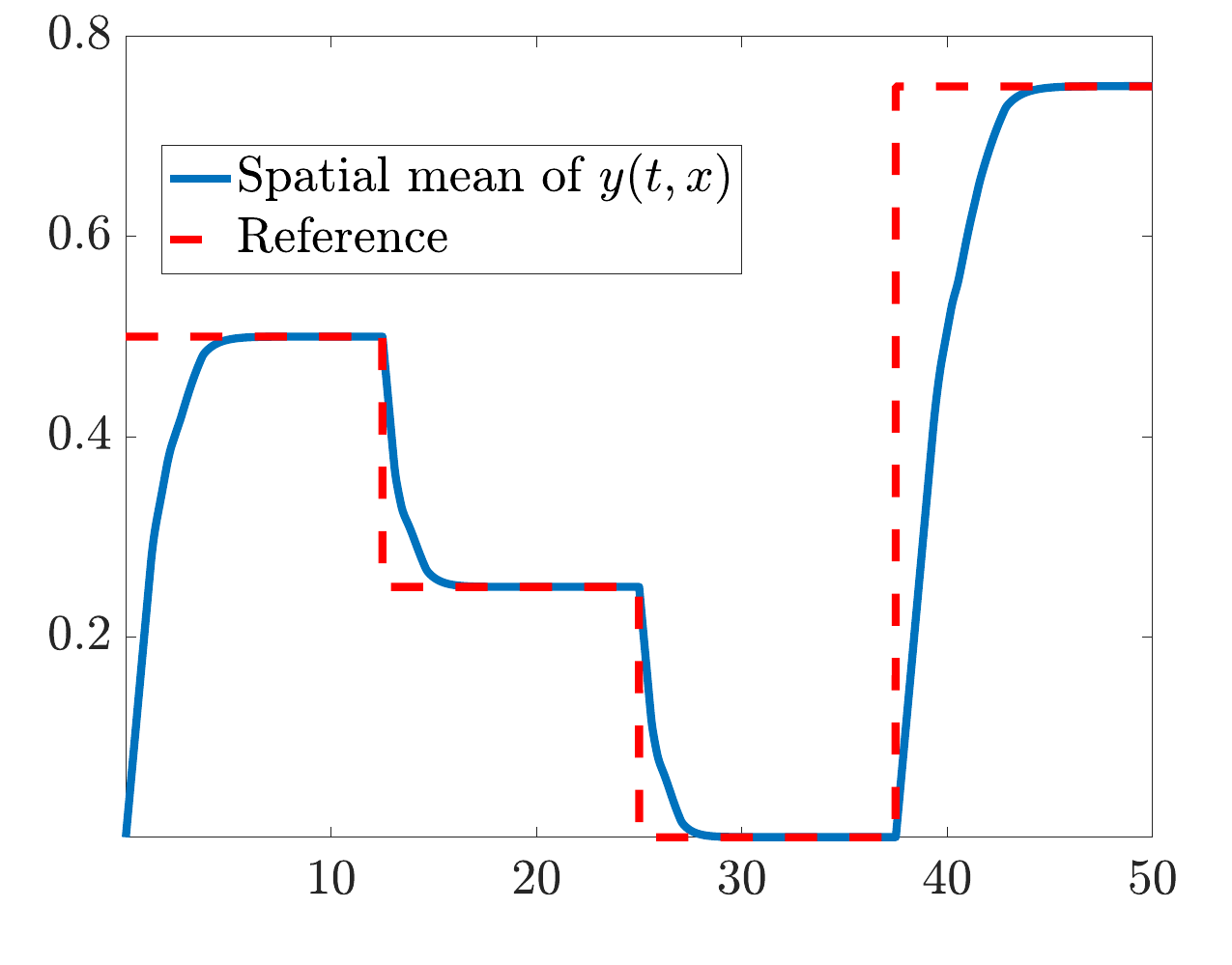}}
\put(330,0){\includegraphics[width=55mm]{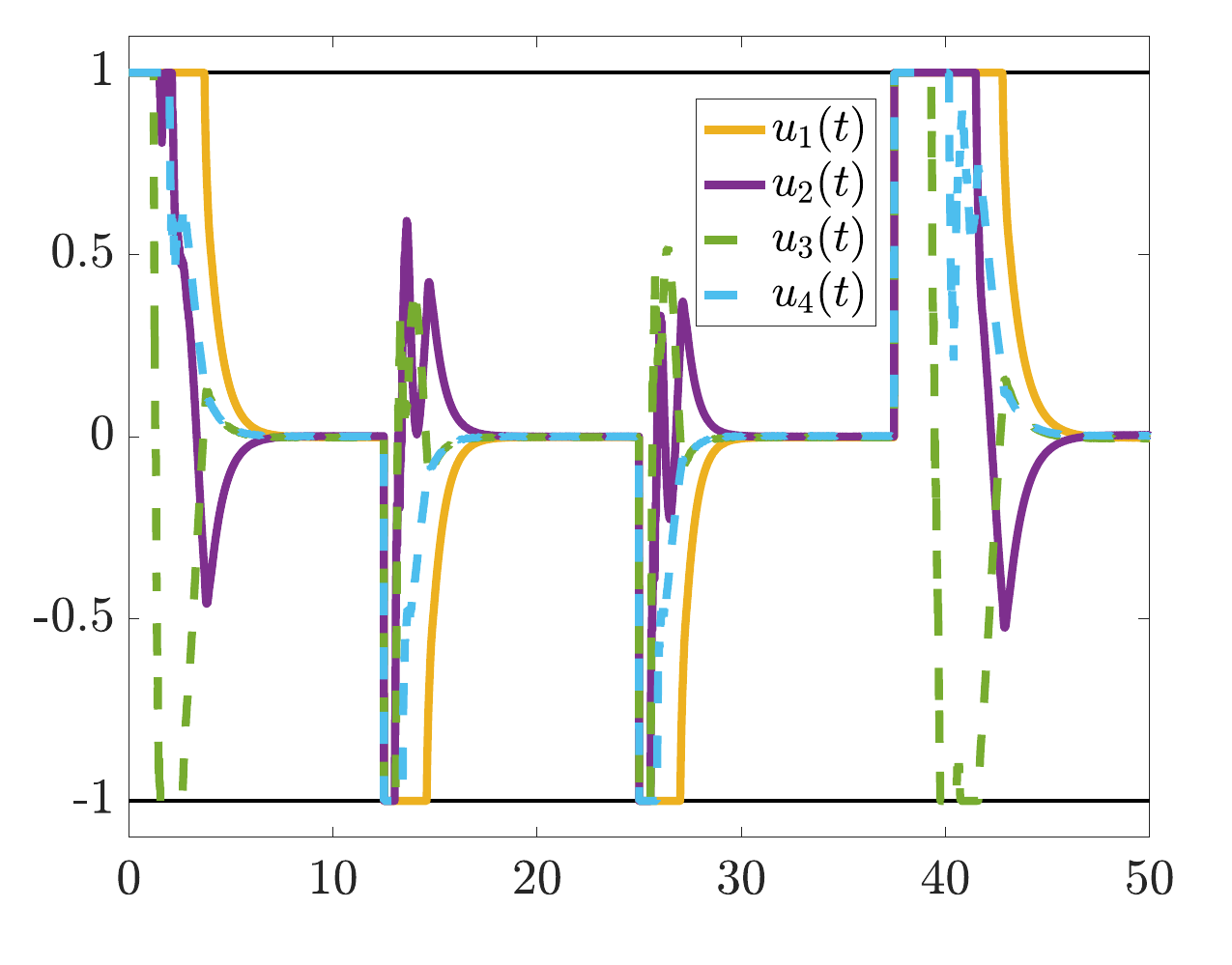}}
\put(128,-3){\footnotesize $x$}
\put(38,-5){\footnotesize $t [\mr{s}]$}
\put(-12,50){\footnotesize \rotatebox{90}{$y(t,x)$}}
\put(245,-5){\footnotesize $t [\mr{s}]$}
\put(402,-5){\footnotesize $t\,[\mr{s}]$}
\end{picture}
\caption{Closed-loop simulation of the nonlinear KdV system with MPC controller -- Tracking a piecewise constant spatial profile reference. Left: time evolution of the spatial profile $y(t,x)$. Middle: spatial mean of the $y(t,x)$. Right: the four control inputs.}
\label{fig_kdv}
\end{figure*}

\section{Conclusion}
This article proposes a time-certified algorithm for input-constrained NMPC problems, in which Koopman operator is used to identify a lifted high-dimensional linear predictor model. The resulting small Box-QP is formulated by eliminating the lifted high-dimensional states, and finally, our previous time-certified algorithm (see \cite{wu2023direct}) is applied to solve the Box-QP. Future work includes (\textit{i}) improving the computation efficiency further while preserving the time-certified feature; and (\textit{ii}) extension to general NMPC problems while preserving the time-certified feature.

\bibliography{ifacconf}             % bib file to produce the bibliography

\begin{thebibliography}{25}
\providecommand{\natexlab}[1]{#1}
\providecommand{\url}[1]{\texttt{#1}}
\providecommand{\urlprefix}{URL }
\expandafter\ifx\csname urlstyle\endcsname\relax
  \providecommand{\doi}[1]{doi:\discretionary{}{}{}#1}\else
  \providecommand{\doi}{doi:\discretionary{}{}{}\begingroup \urlstyle{rm}\Url}\fi

\bibitem[{Alessio and Bemporad(2009)}]{alessio2009survey}
Alessio, A. and Bemporad, A. (2009).
\newblock A survey on explicit model predictive control.
\newblock In L.~Magni, D.M. Raimondo, and F.~Allg\"{o}wer (eds.), \emph{Nonlinear Model Predictive Control: Towards New Challenging Applications}, 345--369. Springer, Berlin--Heidelberg.

\bibitem[{Arnstr{\"o}m and Axehill(2019)}]{arnstrom2019exact}
Arnstr{\"o}m, D. and Axehill, D. (2019).
\newblock Exact complexity certification of a standard primal active-set method for quadratic programming.
\newblock In \emph{Proceedings of the 58th IEEE Conference on Decision and Control}, 4317--4324.

\bibitem[{Bemporad et~al.(2002)Bemporad, Morari, Dua, and Pistikopoulos}]{bemporad2002explicit}
Bemporad, A., Morari, M., Dua, V., and Pistikopoulos, E.N. (2002).
\newblock The explicit linear quadratic regulator for constrained systems.
\newblock \emph{Automatica}, 38(1), 3--20.

\bibitem[{Bemporad and Patrinos(2012)}]{bemporad2012simple}
Bemporad, A. and Patrinos, P. (2012).
\newblock Simple and certifiable quadratic programming algorithms for embedded linear model predictive control.
\newblock \emph{IFAC Proceedings Volumes}, 45(17), 14--20.

\bibitem[{Cimini and Bemporad(2017)}]{cimini2017exact}
Cimini, G. and Bemporad, A. (2017).
\newblock Exact complexity certification of active-set methods for quadratic programming.
\newblock \emph{IEEE Transactions on Automatic Control}, 62(12), 6094--6109.

\bibitem[{Cimini and Bemporad(2019)}]{cimini2019complexity}
Cimini, G. and Bemporad, A. (2019).
\newblock Complexity and convergence certification of a block principal pivoting method for box-constrained quadratic programs.
\newblock \emph{Automatica}, 100, 29--37.

\bibitem[{Ferreau et~al.(2014)Ferreau, Kirches, Potschka, Bock, and Diehl}]{ferreau2014qpoases}
Ferreau, H., Kirches, C., Potschka, A., Bock, H., and Diehl, M. (2014).
\newblock qp{OASES}: {A} parametric active-set algorithm for quadratic programming.
\newblock \emph{Mathematical Programming Computation}, 6, 327--363.

\bibitem[{Giselsson(2012)}]{giselsson2012execution}
Giselsson, P. (2012).
\newblock Execution time certification for gradient-based optimization in model predictive control.
\newblock In \emph{Proceedings of the 51st IEEE Conference on Decision and Control}, 3165--3170.

\bibitem[{Gros et~al.(2020)Gros, Zanon, Quirynen, Bemporad, and Diehl}]{gros2020linear}
Gros, S., Zanon, M., Quirynen, R., Bemporad, A., and Diehl, M. (2020).
\newblock From linear to nonlinear {MPC}: {Bridging} the gap via the real-time iteration.
\newblock \emph{International Journal of Control}, 93(1), 62--80.

\bibitem[{Koopman(1931)}]{koopman1931hamiltonian}
Koopman, B. (1931).
\newblock Hamiltonian systems and transformation in {H}ilbert space.
\newblock \emph{Proceedings of the National Academy of Sciences}, 17(5), 315--318.

\bibitem[{Koopman and Neumann(1932)}]{koopman1932dynamical}
Koopman, B. and Neumann, J. (1932).
\newblock Dynamical systems of continuous spectra.
\newblock \emph{Proceedings of the National Academy of Sciences}, 18(3), 255--263.

\bibitem[{Korda and Mezi{\'c}(2018)}]{korda2018linear}
Korda, M. and Mezi{\'c}, I. (2018).
\newblock Linear predictors for nonlinear dynamical systems: {K}oopman operator meets model predictive control.
\newblock \emph{Automatica}, 93, 149--160.

\bibitem[{Meylan(2012)}]{KdV2012website}
Meylan, M. (2012).
\newblock Numerical solution of the {KdV}.
\newblock \urlprefix\url{wikiwaves.org/Numerical\_Solution\_of\_the\_KdV}.

\bibitem[{Miura(1976)}]{miura1976korteweg}
Miura, R.M. (1976).
\newblock The {K}orteweg--de{V}ries equation: {A} survey of results.
\newblock \emph{SIAM Review}, 18(3), 412--459.

\bibitem[{Okawa and Nonaka(2021)}]{okawa2021linear}
Okawa, I. and Nonaka, K. (2021).
\newblock Linear complementarity model predictive control with limited iterations for box-constrained problems.
\newblock \emph{Automatica}, 125, 109429.

\bibitem[{Proctor et~al.(2018)Proctor, Brunton, and Kutz}]{proctor2018generalizing}
Proctor, J., Brunton, S., and Kutz, J. (2018).
\newblock Generalizing {K}oopman theory to allow for inputs and control.
\newblock \emph{SIAM Journal on Applied Dynamical Systems}, 17(1), 909--930.

\bibitem[{Qin and Badgwell(2003)}]{qin2003survey}
Qin, S.J. and Badgwell, T.A. (2003).
\newblock A survey of industrial model predictive control technology.
\newblock \emph{Control Engineering Practice}, 11(7), 733--764.

\bibitem[{Richter et~al.(2011)Richter, Jones, and Morari}]{richter2011computational}
Richter, S., Jones, C.N., and Morari, M. (2011).
\newblock Computational complexity certification for real-time {MPC} with input constraints based on the fast gradient method.
\newblock \emph{IEEE Transactions on Automatic Control}, 57(6), 1391--1403.

\bibitem[{Stellato et~al.(2020)Stellato, Banjac, Goulart, Bemporad, and Boyd}]{stellato2020osqp}
Stellato, B., Banjac, G., Goulart, P., Bemporad, A., and Boyd, S. (2020).
\newblock {OSQP}: {A}n operator splitting solver for quadratic programs.
\newblock \emph{Mathematical Programming Computation}, 12(4), 637--672.

\bibitem[{Williams et~al.(2016)Williams, Hemati, Dawson, Kevrekidis, and Rowley}]{williams2016extending}
Williams, M., Hemati, M., Dawson, S., Kevrekidis, I., and Rowley, C. (2016).
\newblock Extending data-driven {K}oopman analysis to actuated systems.
\newblock \emph{IFAC-PapersOnLine}, 49(18), 704--709.

\bibitem[{Williams et~al.(2015)Williams, Kevrekidis, and Rowley}]{williams2015data}
Williams, M., Kevrekidis, I., and Rowley, C. (2015).
\newblock A data-driven approximation of the {K}oopman operator: {E}xtending dynamic mode decomposition.
\newblock \emph{Journal of Nonlinear Science}, 25, 1307--1346.

\bibitem[{Wu and Bemporad(2023{\natexlab{a}})}]{wu2023construction}
Wu, L. and Bemporad, A. (2023{\natexlab{a}}).
\newblock A construction-free coordinate-descent augmented-{L}agrangian method for embedded linear {MPC} based on {ARX} models.
\newblock \emph{IFAC-PapersOnLine}, 56(2), 9423--9428.

\bibitem[{Wu and Bemporad(2023{\natexlab{b}})}]{wu2023simple}
Wu, L. and Bemporad, A. (2023{\natexlab{b}}).
\newblock A {S}imple and {F}ast {C}oordinate-{D}escent {A}augmented-{L}agrangian {S}olver for {M}odel {P}redictive {C}ontrol.
\newblock \emph{IEEE Transactions on Automatic Control}, 68(11), 6860--6866.
\newblock \doi{10.1109/TAC.2023.3241238}.

\bibitem[{Wu and Braatz(2023)}]{wu2023direct}
Wu, L. and Braatz, R.D. (2023).
\newblock A direct optimization algorithm for input-constrained {MPC}.
\newblock \emph{arXiv preprint arXiv:2306.15079}.

\bibitem[{Zavala and Biegler(2009)}]{zavala2009advanced}
Zavala, V.M. and Biegler, L.T. (2009).
\newblock The advanced-step {NMPC} controller: {O}ptimality, stability and robustness.
\newblock \emph{Automatica}, 45(1), 86--93.

\end{thebibliography}
                                                     % with bibtex (preferred)

\end{document}